\let\P\relax
\newcommand{\P}{\mathbf{P}}
\newcommand{\N}{\mathbf{N}}
\newcommand{\Z}{\mathbf{Z}}
\newcommand{\Q}{\mathbf{Q}}
\newcommand{\R}{\mathbf{R}}
\newcommand{\E}{\mathbf{E}}
\newcommand{\tto}{\Rightarrow}
\newcommand{\eps}{\epsilon}
\DeclareMathOperator*{\esssup}{ess\,sup}
\DeclareMathOperator*{\essinf}{ess\,inf}
\newcommand{\inv}[1]{F^{\leftarrow}_{#1}}
\newtheorem{theorem}{Theorem}
\newtheorem{proposition}[theorem]{Proposition}
\newtheorem{lemma}[theorem]{Lemma}
\begin{document}

\title[CLT via Wasserstein Metric]{A Proof of the Central Limit Theorem using the $2$-Wasserstein metric}
\author{Calvin Wooyoung Chin}
\begin{abstract}
We prove the Lindeberg--Feller central limit theorem without using characteristic functions or Taylor expansions, but instead by measuring how far a distribution is from the standard normal distribution according to the $2$-Wasserstein metric.
This falls under the category of renormalization group methods.
The facts we need about the metric are explained and proved in detail.
We illustrate the idea on a classical version of the central limit theorem before going into the main proof.

\end{abstract}
\maketitle

\section{Introduction}
Let $X$ and $Y$ be independent random variables with mean $0$ and variance $1$.
The central question of this note is whether the distribution of
\[
\frac{X+Y}{\sqrt{2}}
\]
is ``closer" to the standard normal than those of $X$ and $Y$ in some appropriate sense.
Assuming this is true, the idea is that if $X_1,X_2,\dots$ are independent with mean $0$ and variance $1$, then
\[
\frac{X_1+X_2+X_3+X_4}{\sqrt{4}} = \frac{\frac{X_1+X_2}{\sqrt{2}} + \frac{X_3+X_4}{\sqrt{2}}}{\sqrt{2}}
\]
would be closer to the standard normal than $(X_1+X_2)/\sqrt{2}$ and $(X_3+X_4)/\sqrt{2}$, which are in turn closer than $X_1,X_2,X_3$, and $X_4$.
We might repeat this to show that weighted averages of $8$ terms, $16$ terms, $\dots$ are increasingly closer to the standard normal, hopefully leading to a version of the central limit theorem (CLT).

This idea, referred to as the renormalization group approach, is well-known and capable of proving the CLT; see \cite{Ott23} for a nice summary of the literature.
When the CLT is proven in this way, the CLT itself is often a test bed, and the actual goal is to apply the same idea to harder problems.
In this note, however, we would like to put a little more emphasis on making the proof of the CLT accessible.
Our approach falls under the category that defines an actual topological metric between distributions.
Other such approaches include \cite{Nei04, Ott23}, where the Zolotarev metric \cite{Zol76} or a metric based on the characteristic function are used.

Before introducing the Zolotarev metric in \cite{Nei04}, the authors briefly discuss the $2$-Wasserstein metric, but choose not to use it because it is not suited for the application of the Banach contraction method.
The $2$-Wasserstein distance between (the distributions of) $X$ and $Y$ is given by
\[
W_2(X,Y) := \inf_{X',Y'}\sqrt{\E\bigl[(X'-Y')^2\bigr]},
\]
where $X'$ and $Y'$ range over random variables with the same distributions as $X$ and $Y$.
The point is that the dependence or correlation between $X'$ and $Y'$, i.e.\ the coupling, is not specified.


The Wasserstein metric is a well-known metric in probability theory that appears in the context of optimal transport and the Wasserstein generative adversarial networks (WGANs) in machine learning, for example.
There is a $p$-Wasserstein metric corresponding to the $L^p$ metric for every $p \in [1,\infty]$.
In this note, we use the $2$-Wasserstein metric to prove the central limit theorem.

An important property of the $2$-Wasserstein metric is the following.
Throughout this note, $\tto$ denotes convergence in distribution.

\begin{restatable}{proposition}{wimpliesdist} \label{prop:W2_implies_dist}
Let $X,X_1,X_2,\dots$ be random variables with mean~$0$ and variance~$1$.
If $W_2(X_n,X) \tto 0$, then $X_n \tto X$.
\end{restatable}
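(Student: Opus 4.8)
The plan is to deduce weak convergence directly from the defining infimum of $W_2$, by testing against Lipschitz functions. First I would invoke the portmanteau characterization of convergence in distribution: to show $X_n \tto X$ it suffices to prove that $\E[f(X_n)] \to \E[f(X)]$ for every bounded Lipschitz function $f$, since such functions are already convergence-determining. So I fix such an $f$, say with Lipschitz constant $L$, and aim to bound $|\E[f(X_n)] - \E[f(X)]|$ by a multiple of $W_2(X_n,X)$.

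The key observation is that \emph{every} coupling controls this difference. Given random variables $X'$ and $X''$ on a common probability space with $X' \sim X_n$ and $X'' \sim X$, we have
\[
|\E[f(X_n)] - \E[f(X)]| = |\E[f(X') - f(X'')]| \le L\,\E|X'-X''| \le L\sqrt{\E\bigl[(X'-X'')^2\bigr]},
\]
where the final step is Cauchy--Schwarz (equivalently Jensen). Taking the infimum over all couplings and reading off the definition of $W_2$ gives $|\E[f(X_n)] - \E[f(X)]| \le L\,W_2(X_n,X)$. Since the hypothesis forces the right-hand side to $0$, we conclude $\E[f(X_n)] \to \E[f(X)]$, and as $f$ was an arbitrary bounded Lipschitz function, portmanteau yields $X_n \tto X$. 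The mean-$0$, variance-$1$ assumption enters only to guarantee that $X_n, X \in L^2$, so that $W_2(X_n,X)$ is finite and the couplings in question have finite second moment.

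The main obstacle here is conceptual rather than computational: one must pass the infimum in the definition of $W_2$ correctly through the inequality (the bound holds for each coupling, hence for the infimum), and one must be comfortable that bounded Lipschitz test functions already determine convergence in distribution. An alternative route I would keep in reserve, once the quantile representation $W_2(X_n,X)^2 = \int_0^1 \bigl(\inv{X_n}(u) - \inv{X}(u)\bigr)^2\,du$ is available, is to note that convergence $W_2(X_n,X)\to 0$ is precisely $L^2([0,1])$ convergence of the quantile functions $\inv{X_n} \to \inv{X}$, which forces convergence along a subsequence almost everywhere and hence convergence of distribution functions at continuity points. The coupling argument above, however, is shorter and sidesteps any discussion of generalized inverses, so I would present it as the main proof.
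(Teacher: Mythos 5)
Your proof is correct, but it takes a genuinely different route from the paper. You work directly with the coupling definition of $W_2$: every coupling $(X',X'')$ gives $|\E[f(X_n)]-\E[f(X)]| \le L\,\E|X'-X''| \le L\sqrt{\E[(X'-X'')^2]}$, the bound passes to the infimum over couplings to give $|\E[f(X_n)]-\E[f(X)]| \le L\,W_2(X_n,X)$, and you finish with the portmanteau-type fact that bounded Lipschitz functions are convergence-determining. The paper instead argues by contrapositive through quantile functions: if $X_n \not\tto X$, there is a continuity point $t$ of $\inv{X}$ with $|\inv{X_{n_k}}(t)-\inv{X}(t)| \ge 2\eps$ along a subsequence, and monotonicity of the inverses forces $\int_0^1 (\inv{X_{n_k}} - \inv{X})^2 \ge \eps^2\delta$, so $W_2(X_n,X)\not\to 0$ via the characterization $W_2(X,Y)^2 = \int_0^1 (\inv{X}-\inv{Y})^2$ of Corollary~\ref{cor:W2_inv}. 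Your argument is more elementary in one sense and more general: it needs neither the quantile characterization (which in the paper rests on the rearrangement inequality, Proposition~\ref{prop:rearr}) nor any theory of generalized inverses, it really only uses the $W_1 \le W_2$ comparison, and it transfers verbatim to random vectors or metric-space-valued variables. What it assumes instead is the Lipschitz form of the portmanteau theorem --- exactly the tool the paper deliberately sets aside, since Section~\ref{sec:2-Wasserstein} announces that inverses of CDFs will replace portmanteau throughout; the paper's route keeps the entire development inside that single toolbox and, as a by-product, its contrapositive is quantitative, producing an explicit $\eps^2\delta$ lower bound on $W_2(X_{n_k},X)^2$ whenever weak convergence fails. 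Note that your ``reserve'' argument ($L^2(0,1)$ convergence of quantile functions, hence a.e.\ convergence along a subsequence) is essentially the paper's proof in disguise. One small remark: as you correctly observe, the mean-$0$, variance-$1$ hypothesis plays no role in your argument beyond finiteness of second moments; the paper states it only to match the normalized setting in which its $W_2$ facts are phrased.
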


\begin{restatable}{proposition}{distimpliesw} \label{prop:dist_implies_W2}
Let $X,Y,X_1,X_2,\dots$ be random variables with mean~$0$ and variance~$1$.
If $X_n \tto X$, then $W_2(X_n,Y) \to W_2(X,Y)$.
\end{restatable}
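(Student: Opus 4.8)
The plan is to reduce everything to a statement about quantile functions in $L^2(0,1)$. Recall the standard one-dimensional representation
\[
W_2(X,Y)^2 = \int_0^1 \bigl(\inv{X}(u) - \inv{Y}(u)\bigr)^2 \, du,
\]
so that $W_2(X,Y) = \|\inv{X} - \inv{Y}\|_{L^2(0,1)}$, where the finiteness of the second moment of $Y$ guarantees $\inv{Y} \in L^2(0,1)$ and hence that all the quantities are finite. Applying the reverse triangle inequality in $L^2(0,1)$ with $a = \inv{X_n}-\inv{Y}$ and $b = \inv{X}-\inv{Y}$, I would obtain
\[
\bigl| W_2(X_n, Y) - W_2(X, Y) \bigr| \le \|\inv{X_n} - \inv{X}\|_{L^2(0,1)},
\]
so it suffices to prove that $\inv{X_n} \to \inv{X}$ in $L^2(0,1)$. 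Note that $Y$ plays no further role: only the normalization of $X_n$ and $X$ will matter.

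Next I would assemble the two ingredients behind this $L^2$ convergence. First, convergence in distribution is equivalent to pointwise convergence of quantile functions at continuity points of the limit, so $\inv{X_n}(u) \to \inv{X}(u)$ at every $u$ where $\inv{X}$ is continuous; since $\inv{X}$ is nondecreasing it has at most countably many discontinuities, and this holds for Lebesgue-a.e.\ $u \in (0,1)$. Second, the hypotheses enter exactly here: mean $0$ and variance $1$ give $\|\inv{X_n}\|_{L^2(0,1)}^2 = \E[X_n^2] = 1 = \E[X^2] = \|\inv{X}\|_{L^2(0,1)}^2$, so the $L^2$ norms converge (indeed are constant).

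The main obstacle is upgrading a.e.\ convergence to $L^2$ convergence; a.e.\ convergence by itself only yields the lower-semicontinuity bound $W_2(X,Y) \le \liminf_n W_2(X_n, Y)$ via Fatou's lemma, since mass could escape to infinity. This is precisely what the second-moment normalization rules out, and I would make the upgrade with the elementary Scheffé/Riesz-type fact: if $f_n \to f$ a.e.\ and $\|f_n\|_{L^2} \to \|f\|_{L^2} < \infty$, then $\|f_n - f\|_{L^2} \to 0$. Its proof is Fatou applied to the nonnegative functions $2 f_n^2 + 2 f^2 - (f_n - f)^2$, which converge a.e.\ to $4 f^2$; comparing $\int 4 f^2$ with $\liminf_n \int \bigl(2 f_n^2 + 2 f^2 - (f_n - f)^2\bigr)$ forces $\limsup_n \int (f_n - f)^2 \le 0$. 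Taking $f_n = \inv{X_n}$ and $f = \inv{X}$ yields $\|\inv{X_n} - \inv{X}\|_{L^2(0,1)} \to 0$, which combined with the reverse triangle inequality above completes the proof.
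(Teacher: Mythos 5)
Your proof is correct, and while it shares the paper's skeleton---pass to quantile functions via $W_2(X,Y)^2 = \int_0^1 (\inv{X}-\inv{Y})^2$, use that $X_n \tto X$ gives $\inv{X_n} \to \inv{X}$ a.e.\ on $(0,1)$, and exploit the normalization $\E X_n^2 = \E X^2 = 1$---it diverges at both of the remaining steps. For the reduction, the paper expands $\E[(X_n-Y)^2] = 2 - 2\E[X_n Y]$ and applies Cauchy--Schwarz to reduce to $\E[(X_n-X)^2] \to 0$, whereas you get the same reduction in one line from the reverse triangle inequality in $L^2(0,1)$; the two are essentially equivalent, and both correctly isolate the point that $Y$ is irrelevant once $L^2$ convergence of quantiles is known. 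The genuine difference is in how that $L^2$ convergence is established. The paper runs an explicit truncation and uniform-integrability argument: it picks $M$ with $\E[X^2;|X|>M-1] \le \eps$, uses a continuous cutoff $\phi$ and the bounded convergence theorem to get $\limsup_n \E[X_n^2;|X_n|>M] \le \eps$, then splits $(X_n-X)^2$ at level $2M$ and controls the tail by $4\E[X_n^2;|X_n|>M] + 4\E[X^2;|X|>M]$. You instead invoke the Riesz--Scheff\'e lemma, with the Fatou argument applied to $2f_n^2 + 2f^2 - (f_n-f)^2 \ge 0$, which is valid here precisely because $\|\inv{X_n}\|_{L^2}^2 = 1 = \|\inv{X}\|_{L^2}^2$ holds exactly. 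Your route is shorter and arguably cleaner, since the norm constraint is consumed in a single stroke; the paper's truncation argument is more hands-on but has the advantage that the same technique is reused nearly verbatim elsewhere in the paper, and it makes visible the underlying mechanism (asymptotic uniform integrability of $\{X_n^2\}$) that your Fatou trick encapsulates. Your side remark that a.e.\ convergence alone would only give lower semicontinuity of $W_2$ correctly identifies where the hypothesis of unit variance is indispensable.
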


Thanks to these facts, we can prove the central limit theorem by examining the $2$-Wasserstein distances.
Our key result is the following. Throughout the note, let $Z$ be a standard normal random variable.

\begin{restatable}{theorem}{averagecloser} \label{thm:average_closer}
If $X$ and $Y$ are independent random variables with mean $0$ and variance $1$, then
\[
W_2\biggl(\frac{X+Y}{\sqrt{2}}, Z\biggr)^2 \le \frac{W_2(X,Z)^2 + W_2(Y,Z)^2}{2}.
\]
The equality holds if and only if both $X$ and $Y$ are standard normal.
\end{restatable}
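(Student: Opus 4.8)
The plan is to exploit the self-similarity of the Gaussian---namely that $(Z_1+Z_2)/\sqrt2$ is again standard normal when $Z_1,Z_2$ are independent standard normals---together with the fact that on $\R$ the $2$-Wasserstein distance is realized by the monotone (quantile) coupling. First I would take the optimal couplings of $X$ with a standard normal $Z_1$ and of $Y$ with a standard normal $Z_2$, built from the quantile functions as $X=\inv{X}(U)$, $Z_1=\inv{Z}(U)$ and $Y=\inv{Y}(V)$, $Z_2=\inv{Z}(V)$, and arrange that $(U,V)$ are independent. Then $(X,Z_1)$ and $(Y,Z_2)$ are independent pairs, $Z_1,Z_2$ are independent standard normals, so $(Z_1+Z_2)/\sqrt2$ is standard normal and furnishes an admissible coupling for the left-hand side.

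Since this coupling is only one competitor in the infimum defining $W_2$, I obtain
\[
W_2\!\biggl(\frac{X+Y}{\sqrt2},Z\biggr)^{2}\le \E\!\biggl[\Bigl(\tfrac{(X-Z_1)+(Y-Z_2)}{\sqrt2}\Bigr)^{2}\biggr],
\]
and expanding the square gives $\tfrac12\bigl(\E[(X-Z_1)^2]+\E[(Y-Z_2)^2]+2\,\E[(X-Z_1)(Y-Z_2)]\bigr)$. The cross term vanishes because $(X,Z_1)$ is independent of $(Y,Z_2)$ and all four variables have mean $0$, so it factors as $\E[X-Z_1]\,\E[Y-Z_2]=0$. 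The two remaining terms equal $W_2(X,Z)^2$ and $W_2(Y,Z)^2$ by optimality of the quantile couplings, which yields the stated inequality.

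For the equality statement the ``if'' direction is immediate: if $X,Y$ are standard normal then all three distances are $0$. The substance is the ``only if'' direction. Equality forces the coupling above to be optimal for $W_2\bigl((X+Y)/\sqrt2,Z\bigr)$; because the target $Z$ has a continuous, strictly increasing distribution, the optimal coupling is unique and monotone, so $(X+Y)/\sqrt2$ must equal a nondecreasing function of $(Z_1+Z_2)/\sqrt2$ almost surely. Writing $X=a(Z_1)$ and $Y=b(Z_2)$ with $a=\inv{X}\circ F_Z$ and $b=\inv{Y}\circ F_Z$ nondecreasing, and passing to the rotated Gaussian coordinates $T=(Z_1+Z_2)/\sqrt2$ and $W=(Z_1-Z_2)/\sqrt2$ (again independent standard normals), the requirement that $(X+Y)/\sqrt2=\bigl(a(Z_1)+b(Z_2)\bigr)/\sqrt2$ depend on $T$ alone becomes the Pexider-type identity $a(p)+b(q)=\phi(p+q)$ for Lebesgue-almost every $(p,q)\in\R^2$.

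The hard part is this last rigidity step. I expect to argue that a monotone solution of $a(p)+b(q)=\phi(p+q)$ must be affine with a common slope, so $a(z)=mz+c$ with $m\ge 0$. Imposing $\E[a(Z_1)]=0$ and $\mathrm{Var}\,a(Z_1)=1$ then forces $c=0$ and $m=1$, i.e.\ $a(z)=z$ and likewise $b(z)=z$; hence $X=Z_1$ and $Y=Z_2$ are standard normal. Making precise both the uniqueness of the optimal coupling and the passage from the almost-everywhere functional identity to the affine conclusion via monotonicity is where the real work lies.
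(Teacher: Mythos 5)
Your inequality half is essentially identical to the paper's: couple $X$ and $Y$ to independent standard normals via quantile functions evaluated at independent uniforms, observe that the cross term vanishes by independence and zero means, and use that $(Z_1+Z_2)/\sqrt{2}$ is again standard normal; the paper phrases this with $F(x,y)=f_1(x)+f_2(y)$ and $G(x,y)=g(x)+g(y)$ on $(0,1)^2$, which is your coupling in different notation. Where you genuinely diverge is the equality case, which is the theorem's nontrivial part. The paper stays self-contained: it shows by an explicit mass-swapping construction on small squares in $(0,1)^2$ that equality forces $F$ to be constant on the level sets of $G$, and then a discretization (choosing $x_k$ with $g(x_k)=k/n$) shows that $f_1$ has equal increments along equal increments of $g$, whence $f_1=\alpha g+\beta$ and the moment conditions force $f_1=g$. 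You instead invoke two external facts: (i) that an optimal plan for quadratic cost on $\R$ with an atomless marginal is the monotone coupling, so equality forces $(X+Y)/\sqrt{2}=\psi\bigl((Z_1+Z_2)/\sqrt{2}\bigr)$ a.s.\ with $\psi$ nondecreasing, and (ii) that monotone solutions of the almost-everywhere Pexider equation $a(p)+b(q)=\phi(p+q)$ are affine. Both facts are true and your outline is sound: (i) is standard one-dimensional optimal transport (optimal plans have monotone support, and atomlessness of the Gaussian marginal makes the slice single-valued off a countable set), and (ii) follows from de Bruijn/Jurkat-type results on a.e.\ functional equations or from a direct increment argument using monotonicity. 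But note that, as you concede, your write-up defers exactly the two steps that carry the difficulty, and the a.e.-to-everywhere passage in (ii) is not cost-free. It is also worth recognizing that the paper's argument is in effect a from-scratch proof of both of your black boxes: the square-swapping step is a hands-on rearrangement proof of the monotonicity rigidity in (i) (the same idea as Proposition~\ref{prop:rearr}), and the level-set discretization is precisely the solution of the Pexider equation under monotonicity in (ii). Your route buys brevity and a conceptual link to optimal transport theory; the paper's buys the elementary self-containedness that is its stated goal.
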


The condition for equality is the nontrivial part.
Theorem~\ref{thm:average_closer} is powerful enough to imply the following theorem that says the sum of small independent random variables has a distribution that is close to the normal, explaining why the normal distribution is so ubiquitous.

\begin{restatable}[Lindeberg--Feller]{theorem}{lindebergfeller}\label{thm:lindeberg-feller}
For each $\eps > 0$, let $M_\eps$ be the supremum of
\[
W_2\Bigl(\sum_{j=1}^n X_j, Z\Bigr),
\]
where $X_1,\dots,X_n$ ranges over any finite sequence of independent mean-zero random variables with $|X_j| \le \eps$ for $j=1,\dots,n$ and $\sum_{j=1}^n \E [X_j^2] = 1$.
Then $M_\eps \to 0$ as $\eps \to 0$.
\end{restatable}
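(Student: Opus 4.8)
The plan is to argue by contradiction using a compactness argument, with Theorem~\ref{thm:average_closer} (specifically its equality clause) supplying the contradiction. First I would record that $M_\eps$ is non-decreasing in $\eps$: enlarging $\eps$ only enlarges the class of admissible sequences $(X_j)$, so the supremum cannot decrease. Hence $L := \lim_{\eps \to 0^+} M_\eps = \inf_{\eps > 0} M_\eps$ exists in $[0,\sqrt 2\,]$, the upper bound coming from the independent coupling, for which $\E[(S-Z)^2] = \E[S^2] + \E[Z^2] = 2$ since both means vanish. Suppose for contradiction that $L > 0$. I would then pick $\eps_k \to 0$ and admissible sums $S_k = \sum_j X_j^{(k)}$ with $W_2(S_k, Z) \ge M_{\eps_k} - 1/k$, so that $W_2(S_k, Z) \to L$.

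Next I would extract a distributional limit of the $S_k$ whose variance is still $1$. Tightness is immediate from Chebyshev, but the essential point is uniform square-integrability of $\{S_k^2\}$, so that variance is not lost in the limit. This I would obtain from a Bernstein-type tail bound, available because each summand satisfies $|X_j^{(k)}| \le \eps_k \to 0$ while $\sum_j \E[(X_j^{(k)})^2] = 1$. Passing to a subsequence, $S_k \tto S^*$ with $S^*$ of mean zero and variance one, and Proposition~\ref{prop:dist_implies_W2} then gives $W_2(S^*, Z) = L$.

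The crux is to exhibit $S^*$ as an average $\frac{A+B}{\sqrt 2}$ of two independent mean-zero, variance-one variables $A, B$, each with $W_2(A, Z), W_2(B, Z) \le L$. To this end I would split each $S_k$ into two independent blocks by greedily assigning the summands so that the block variances $v_k$ and $1-v_k$ differ by at most $\eps_k^2$; since each summand contributes variance at most $\eps_k^2$, this forces $v_k \to 1/2$. Normalizing the blocks to unit variance gives $A_k, B_k$ that are themselves admissible for parameter $\eps_k/\sqrt{v_k} \to 0$, so $W_2(A_k, Z), W_2(B_k, Z) \le M_{\eps_k/\sqrt{v_k}} \to L$. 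Extracting a further subsequence along which $(A_k, B_k) \tto (A,B)$ jointly (independence passing to the product in the limit) and invoking uniform square-integrability once more, I obtain independent unit-variance $A, B$ with $S^* \stackrel{d}{=} \frac{A+B}{\sqrt 2}$ and, by Proposition~\ref{prop:dist_implies_W2}, $W_2(A, Z), W_2(B, Z) \le L$.

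Now a squeeze closes the argument: Theorem~\ref{thm:average_closer} gives
\[
L^2 = W_2\Bigl(\tfrac{A+B}{\sqrt 2}, Z\Bigr)^2 \le \frac{W_2(A, Z)^2 + W_2(B, Z)^2}{2} \le L^2,
\]
so equality holds throughout, and the equality clause of Theorem~\ref{thm:average_closer} forces $A$ and $B$ to be standard normal. Then $S^*$ is standard normal, whence $L = W_2(S^*, Z) = 0$, contradicting $L > 0$; therefore $L = 0$, i.e.\ $M_\eps \to 0$. The hard part will be the compactness bookkeeping rather than any single inequality: guaranteeing that variance survives both limits (the uniform square-integrability from the Bernstein bound) and that the balanced splitting yields blocks whose variances truly tend to $1/2$ with parameters still tending to $0$. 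Everything else is a direct application of Proposition~\ref{prop:dist_implies_W2} together with the equality case of Theorem~\ref{thm:average_closer}, which is precisely where the remark that ``the condition for equality is the nontrivial part'' earns its keep.
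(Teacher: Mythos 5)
Your proposal is correct and takes essentially the same route as the paper's proof: near-extremal admissible sums as $\eps \to 0$, a balanced split into two independent blocks with variances tending to $1/2$, compactness plus moment control to obtain unit-variance limits with $W_2$-distance at most the limiting value, and the squeeze through the equality case of Theorem~\ref{thm:average_closer}. The only cosmetic differences are that the paper secures the needed uniform square-integrability via the elementary fourth-moment bound $\E[X_{nj}^4] \le \E[X_{nj}^2]$ together with Lemmas~\ref{lem:conv_subseq} and~\ref{lem:bdd_4_moment} rather than a Bernstein-type tail estimate, splits by choosing a prefix rather than a greedy assignment, and concludes $M = 0$ directly rather than by contradiction.
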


This is one way to state the Lindeberg--Feller theorem \cite[Theorem~3.4.10]{DurPTE}, which is the most general central limit theorem one typically sees. After the theorem in \cite{DurPTE}, there is a brief remark on why this implies the usual Lindeberg--Levy~CLT.

Our approach to the central limit theorem is more quantitative or ``soft analytic" than many classical proofs.
Unlike the standard proof \cite[Theorem~3.4.1]{DurPTE} using characteristic functions or the proof \cite[Subsection~2.2.3]{TaoRM} using the moment method, this proof avoids the use of Fourier analysis.
Other such approaches include the one by Trotter \cite{Tro59} that replaces characteristic functions with linear operations on some function space, and the proof by the so-called Lindeberg swapping \cite{Lin22}.
Unlike those proofs, the one we present here does not even use Taylor expansions.


In Section~\ref{sec:lindeberg_levy}, we will use Theorem~\ref{thm:average_closer} to complete the proof idea we sketched in the beginning of this introduction.
This part is not new, and similar proofs can be found in the literature cited above.
After introducing the notion of inverses of cumulative distribution functions in Section~\ref{sec:2-Wasserstein}, we will prove Theorem~\ref{thm:average_closer} in Section~\ref{sec:average_closer}.
In Section~\ref{sec:lindeberg-feller} that follows, we will prove the Lindeberg--Feller theorem (Theorem~\ref{thm:lindeberg-feller}).

Any proof of the central limit theorem involves some measure theoretic probability.
In our case, Proposition~\ref{prop:W2_implies_dist} and~\ref{prop:dist_implies_W2} and a couple other facts fall under this category.
Since the proofs of these are rather standard and do not contain new idea, we collected them at the end in Section~\ref{sec:measure_proofs} for those who are interested.

\section{Proof of a version of the Lindeberg--L\'evy CLT} \label{sec:lindeberg_levy}

To understand how one can use Theorem~\ref{thm:average_closer} to prove the central limit theorem, let us consider the following easier version.
Let $S_n := X_1 + \dots + X_n$.

\begin{theorem}[Lindeberg--L\'evy, bounded, lacunary] \label{thm:lindeberg_levy_bounded_lacunary}
Let $X_1,X_2,\dots$ be i.i.d.\ random variables with $|X_1| \le B < \infty$, $\E X_1 = 0$, and $\E X_1^2 = 1$.
Then,
\[
\frac{S_{2^n}}{\sqrt{2^n}} \tto Z
\]
where $Z$ is a standard normal random variable.
\end{theorem}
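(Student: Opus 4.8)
The plan is to use Theorem~\ref{thm:average_closer} repeatedly to show that $W_2(S_{2^n}/\sqrt{2^n}, Z) \to 0$, and then invoke Proposition~\ref{prop:W2_implies_dist} to upgrade this to convergence in distribution. The key observation is the self-similar decomposition sketched in the introduction:
\[
\frac{S_{2^n}}{\sqrt{2^n}} = \frac{1}{\sqrt{2}}\biggl(\frac{S_{2^{n-1}}'}{\sqrt{2^{n-1}}} + \frac{S_{2^{n-1}}''}{\sqrt{2^{n-1}}}\biggr),
\]
where $S_{2^{n-1}}' := X_1 + \dots + X_{2^{n-1}}$ and $S_{2^{n-1}}'' := X_{2^{n-1}+1} + \dots + X_{2^n}$ are independent sums, each with the same distribution as $S_{2^{n-1}}$ (using that the $X_j$ are i.i.d.). Each normalized block has mean $0$ and variance $1$, so Theorem~\ref{thm:average_closer} applies directly to these two independent, identically distributed halves.

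First I would set $a_n := W_2(S_{2^n}/\sqrt{2^n}, Z)^2$. Applying Theorem~\ref{thm:average_closer} with $X = S_{2^{n-1}}'/\sqrt{2^{n-1}}$ and $Y = S_{2^{n-1}}''/\sqrt{2^{n-1}}$, and noting both halves have the same distribution as $S_{2^{n-1}}/\sqrt{2^{n-1}}$, I obtain
\[
a_n \le \frac{a_{n-1} + a_{n-1}}{2} = a_{n-1}.
\]
Thus $(a_n)$ is a nonincreasing sequence of nonnegative reals, so it converges to some limit $a_\infty \ge 0$. The remaining task is to show $a_\infty = 0$.

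The main obstacle is that mere monotonicity does not force the limit to be $0$: I need a mechanism guaranteeing \emph{strict} decrease that does not degenerate. The natural idea is to exploit the equality condition of Theorem~\ref{thm:average_closer}, which fails unless $X$ and $Y$ are both standard normal. Here I would argue by contradiction: suppose $a_\infty > 0$. Since $a_n \to a_\infty$ and $a_n \ge a_\infty > 0$ for all $n$, the distributions of $S_{2^n}/\sqrt{2^n}$ stay a bounded distance from $Z$. Because the $X_j$ are bounded by $B$, the sums $S_{2^n}/\sqrt{2^n}$ are tight (their variances equal $1$), so along a subsequence they converge in distribution to some limit $W$ with mean $0$ and variance $1$ and $W_2(W, Z)^2 = a_\infty$, using Proposition~\ref{prop:dist_implies_W2} to pass the distance through the limit. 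Passing to the limit in the self-similar relation, $W$ must satisfy $W \stackrel{d}{=} (W' + W'')/\sqrt{2}$ for independent copies $W', W''$, with equality holding in Theorem~\ref{thm:average_closer}; the equality condition then forces $W$ to be standard normal, giving $a_\infty = W_2(W,Z)^2 = 0$, a contradiction.

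I expect the delicate point to be the tightness-and-limit argument justifying that the equality case of Theorem~\ref{thm:average_closer} can be invoked in the limit, since the theorem's equality clause is stated for fixed distributions rather than for a limiting fixed point of the halving map. An alternative, cleaner route that avoids subsequential limits would be to establish a quantitative strict-contraction estimate: a bound of the form $a_n \le \gamma\, a_{n-1}$ with $\gamma < 1$, or more realistically a bound showing the deficit in Theorem~\ref{thm:average_closer} is bounded below whenever $a_{n-1}$ is bounded away from $0$. If such a quantitative version is available from the proof of Theorem~\ref{thm:average_closer}, then $a_n \to 0$ follows immediately and the whole argument becomes elementary. Either way, the final step is simply Proposition~\ref{prop:W2_implies_dist}, which converts $a_n \to 0$ into $S_{2^n}/\sqrt{2^n} \tto Z$.
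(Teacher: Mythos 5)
Your strategy is essentially the paper's: monotonicity of $a_n := W_2(S_{2^n}/\sqrt{2^n},Z)^2$ via Theorem~\ref{thm:average_closer}, a subsequential limit, and the equality case to kill the limit. But the step you yourself flagged as delicate is, as written, genuinely broken: the fixed-point identity $W \stackrel{d}{=} (W'+W'')/\sqrt{2}$ does not follow from your setup, because you only have convergence along a subsequence $(n_k)$, and $n_k+1$ need not belong to it; the limit of $S_{2^{n_k+1}}/\sqrt{2^{n_k+1}}$ is $(W'+W'')/\sqrt{2}$, and there is no reason to identify this with $W$ before you already know $W$ is normal. The repair is cheap and is exactly what the paper does: since the full sequence $(a_n)$ is monotone, its limit equals $\inf_n a_n$, so Proposition~\ref{prop:dist_implies_W2} applied along $n_k$ and along $n_k+1$ gives \emph{both} $W_2(W,Z)^2 = \inf_n a_n$ and $W_2\bigl((W'+W'')/\sqrt{2},Z\bigr)^2 = \inf_n a_n$. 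This yields equality in Theorem~\ref{thm:average_closer} directly, so $W$ is standard normal and $\inf_n a_n = W_2(W,Z)^2 = 0$; no distributional fixed point and no argument by contradiction are needed.

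There is a second gap: you assert that the subsequential limit $W$ has mean $0$ and variance $1$, but weak convergence does not preserve moments (Fatou only gives $\E W^2 \le 1$), and both Proposition~\ref{prop:dist_implies_W2} and the equality clause of Theorem~\ref{thm:average_closer} require mean $0$ and variance $1$ of the limit. The paper closes this using the hypothesis $|X_1| \le B$, which you never invoke: the explicit computation $\E[(S_n/\sqrt{n})^4] = \bigl(n\E X_1^4 + 3n(n-1)(\E X_1^2)^2\bigr)/n^2 \le B^4 + 3$ gives a uniform fourth-moment bound, Lemma~\ref{lem:conv_subseq} supplies the convergent subsequence, and Lemma~\ref{lem:bdd_4_moment} upgrades convergence in distribution to convergence of the first two moments. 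Finally, the quantitative strict-contraction estimate $a_n \le \gamma a_{n-1}$ with uniform $\gamma < 1$ that you propose as an alternative is not available: the set of mean-zero, variance-one laws at $W_2$-distance at least $\sqrt{a}$ from normal is not $W_2$-compact, so the deficit in Theorem~\ref{thm:average_closer} has no uniform positive lower bound there; the paper's subsequence argument, powered by the fourth-moment bound above, is precisely the compactness substitute that makes the soft version of your idea work.
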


From this, it is not difficult to derive the usual Lindeberg--L\'evy theorem where $X_1$ can be unbounded and $2^n$ is replaced with $n$.
However, we omit the detail as we will prove the stronger Theorem~\ref{thm:lindeberg-feller}.

We need the following two lemmas, which are proved in Section~\ref{sec:measure_proofs}.

\begin{restatable}{lemma}{convsubseq} \label{lem:conv_subseq}
If there is a $B < \infty$ with $\E X_n^2 \le B$ for all $n\in\N$, then some subsequence of $(X_n)_{n\in\N}$ converges in distribution.
\end{restatable}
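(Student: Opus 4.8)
The plan is to derive \emph{tightness} of the sequence $(X_n)_{n\in\N}$ from the uniform second-moment bound and then extract a convergent subsequence by a standard compactness argument. Tightness is the natural quantity to control here, since a uniform $L^2$ bound prevents the distributions from spreading their mass out to infinity, uniformly in $n$.

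First I would apply Markov's inequality to $X_n^2$ to obtain, for every $M > 0$ and every $n \in \N$,
\[
\P(|X_n| > M) = \P(X_n^2 > M^2) \le \frac{\E X_n^2}{M^2} \le \frac{B}{M^2}.
\]
Given $\eps > 0$, choosing $M$ large enough (for instance $M = \sqrt{B/\eps}$) makes the right-hand side at most $\eps$ simultaneously for all $n$, which is precisely the statement that the family $(X_n)_{n\in\N}$ is tight.

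Next I would invoke Helly's selection theorem applied to the distribution functions $F_n(x) := \P(X_n \le x)$: some subsequence $(F_{n_k})$ converges, at every continuity point of the limit, to a nondecreasing right-continuous function $F$ with $0 \le F \le 1$. The step I expect to be the crux is upgrading this vague limit to a genuine distribution function, that is, ruling out the escape of mass to $\pm\infty$; a priori $F$ could have $F(-\infty) > 0$ or $F(+\infty) < 1$. This is exactly where tightness is used: choosing continuity points $\pm M$ of $F$ at which $F_{n_k}(-M)$ and $1 - F_{n_k}(M)$ are uniformly small, passing to the limit in $k$, and then letting $M \to \infty$ forces $\lim_{x\to-\infty} F(x) = 0$ and $\lim_{x\to+\infty} F(x) = 1$. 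Hence $F$ is the distribution function of some random variable $X$, and convergence of $F_{n_k}$ to $F$ at every continuity point of $F$ is by definition the statement $X_{n_k} \tto X$, completing the proof.
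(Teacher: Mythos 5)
Your proposal is correct and follows essentially the same route as the paper: the paper's diagonalization over rationals is precisely the standard proof of Helly's selection theorem that you invoke, and its Chebyshev bound $F_{X_n}(-M) \le B/M^2$, $1 - F_{X_n}(M) \le B/M^2$ is your Markov-inequality tightness estimate used to rule out escape of mass. The only difference is that you cite Helly as a black box where the paper spells out the Bolzano--Weierstrass/diagonal argument.
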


\begin{restatable}{lemma}{momentconv} \label{lem:bdd_4_moment}
If $X_n \tto X$ and there is a $B < \infty$ with $\E X_n^4 \le B$ for all $n\in\N$, then $\E X_n^2 \tto \E X^2$ and $\E X_n \to \E X$.
\end{restatable}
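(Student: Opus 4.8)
The plan is to upgrade the weak convergence $X_n \tto X$ to convergence of the first two moments by a truncation argument, using the uniform fourth-moment bound to ensure that no mass escapes to infinity. First I would record the easy consequences of the hypothesis. By Cauchy--Schwarz (or Jensen applied to $t \mapsto t^2$), $\E X_n^2 \le (\E X_n^4)^{1/2} \le \sqrt{B}$, so the second moments are uniformly bounded. Since $X_n \tto X$ implies $X_n^2 \tto X^2$ by the continuous mapping theorem, the portmanteau theorem (lower semicontinuity of $Y \mapsto \E Y$ under weak convergence for $Y \ge 0$) gives $\E X^2 \le \liminf_n \E X_n^2 \le \sqrt{B} < \infty$; in particular $\E|X| < \infty$, so every quantity in the statement is finite.

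Next I would establish the uniform tail estimates that drive the truncation. For every $K > 0$ and every $n$,
\[
\E\bigl[|X_n|\,\mathbf{1}_{\{|X_n|>K\}}\bigr] \le \frac{1}{K}\,\E X_n^2 \le \frac{\sqrt{B}}{K},
\qquad
\E\bigl[X_n^2\,\mathbf{1}_{\{|X_n|>K\}}\bigr] \le \frac{1}{K^2}\,\E X_n^4 \le \frac{B}{K^2},
\]
using that $|X_n| \le X_n^2/K$ and $X_n^2 \le X_n^4/K^2$ on the event $\{|X_n|>K\}$. These bounds are uniform in $n$, i.e.\ $\{X_n\}$ and $\{X_n^2\}$ are uniformly integrable.

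For the first moment I would introduce the clipped function $g_K(x) := (x \wedge K) \vee (-K)$, which is bounded and continuous, and decompose
\[
|\E X_n - \E X| \le \bigl|\E X_n - \E g_K(X_n)\bigr| + \bigl|\E g_K(X_n) - \E g_K(X)\bigr| + \bigl|\E g_K(X) - \E X\bigr|.
\]
Since $|x - g_K(x)| \le |x|\,\mathbf{1}_{\{|x|>K\}}$, the first term is at most $\sqrt{B}/K$ by the tail estimate above (uniformly in $n$), the third term equals $\E\bigl[|X|\,\mathbf{1}_{\{|X|>K\}}\bigr]$, which tends to $0$ as $K \to \infty$ because $\E|X| < \infty$, and the middle term tends to $0$ as $n \to \infty$ for each fixed $K$ since $g_K$ is bounded continuous and $X_n \tto X$. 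Letting $n \to \infty$ and then $K \to \infty$ yields $\E X_n \to \E X$. The same argument with $x$ replaced by $x^2$ and $g_K$ replaced by the bounded continuous truncation $h_K(x) := x^2 \wedge K^2$, controlling $|x^2 - h_K(x)| \le x^2\,\mathbf{1}_{\{|x|>K\}}$ by the fourth-moment tail bound, gives $\E X_n^2 \to \E X^2$.

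The main obstacle is the interchange of the limits $n \to \infty$ and $K \to \infty$: the whole role of the fourth-moment hypothesis is to make the truncation error uniform in $n$, so that the three-term decomposition closes. A slicker but less self-contained alternative would be to invoke the Skorokhod representation theorem to realize the convergence almost surely and then apply the Vitali convergence theorem; I would prefer the truncation argument above, since it stays entirely within the weak-convergence language already in use.
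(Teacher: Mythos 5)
Your proof is correct, and the key mechanism---the fourth-moment bound making the truncation error uniform in $n$---is exactly the one the paper exploits, but your route to it is genuinely different. The paper first replaces $X_n$ and $X$ by the quantile functions $\inv{X_n}$ and $\inv{X}$ on $(0,1)$, upgrading $X_n \tto X$ to almost sure convergence (an explicit Skorokhod representation), and then applies the bounded convergence theorem to the sharply truncated variables $X_n^2 1_{\{X_n^2 \le M\}}$, which forces it to pick $M$ with $\P(X^2 = M) = 0$ so that the truncated sequence still converges a.s.; the tail is then killed by $\E[X_n^2; X_n^2 > M] \le B/M^2$, which is precisely your estimate. You, by contrast, never leave the weak-convergence language: your clippings $g_K$ and $h_K$ are bounded and continuous, so the middle term of your three-term decomposition converges by the very definition of $X_n \tto X$, no representation theorem is needed, and the continuity of the clipped functions spares you the continuity-point bookkeeping that the indicator truncation imposes on the paper. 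The price of staying weak is that you must know $\E|X| < \infty$ and $\E X^2 < \infty$ before the third term can be sent to zero, and you correctly pay it up front with the weak Fatou/portmanteau step $\E X^2 \le \liminf_n \E X_n^2 \le \sqrt{B}$---a step the paper gets for free from a.s.\ convergence and Fatou. Amusingly, the ``slicker alternative'' you set aside at the end (Skorokhod representation plus a convergence theorem) is in essence the paper's actual proof, so your stated preference makes your argument the more self-contained of the two in the weak-convergence setting, at the cost of the preliminary moment bound.
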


\begin{proof}[Proof of Theorem~\ref{thm:lindeberg_levy_bounded_lacunary}]
By Theorem~\ref{thm:average_closer}, the sequence
\[
\biggl(W_2\biggl(\frac{S_{2^n}}{\sqrt{2^n}}, Z\biggr) \biggr)_{n\in\N}
\]
is nonincreasing. We want to show that this converges to $0$.

Since
\begin{equation} \label{eq:s2nk_tto_l}
\E\biggl[ \biggl(\frac{S_n}{\sqrt{n}}\biggr)^4 \biggr]
= \frac{n\E X_1^4 + 3n(n-1)(\E X_1^2)^2}{n^2} \le B^4 + 3
\end{equation}
for all $n\in\N$, lemma~\ref{lem:conv_subseq} and lemma~\ref{lem:bdd_4_moment} imply that for some subsequence $n_1 < n_2 < \dots$, we have
\[
\frac{S_{2^{n_k}}}{\sqrt{2^{n_k}}} \tto L \qquad \text{as $k\to\infty$,}
\]
where $L$ is some random variable with mean~$0$ and variance~$1$.

By Proposition~\ref{prop:dist_implies_W2}, we have
\begin{equation} \label{eq:w2lz_limit}
W_2(L,Z) = \lim_{k\to\infty} W_2\biggl(\frac{S_{2^{n_k}}}{\sqrt{2^{n_k}}}, Z\biggr) = \inf_{n\in\N}  W_2\biggl(\frac{S_{2^n}}{\sqrt{2^n}}, Z\biggr).
\end{equation}
If $L'$ is an independent copy of $L$, then
\[
\frac{S_{2^{n_k+1}}}{\sqrt{2^{n_k+1}}} \tto \frac{L+L'}{\sqrt{2}},
\]
and thus
\begin{equation} \label{eq:w2llpz_limit}
W_2\biggl(\frac{L+L'}{\sqrt{2}}, Z\biggr) = \lim_{k\to\infty} W_2\biggl(\frac{S_{2^{n_k+1}}}{\sqrt{2^{n_k+1}}}, Z\biggr) = \inf_{n\in\N}  W_2\biggl(\frac{S_{2^n}}{\sqrt{2^n}}, Z\biggr).
\end{equation}

By \eqref{eq:w2lz_limit} and \eqref{eq:w2llpz_limit}, we have
\[
W_2\biggl(\frac{L+L'}{\sqrt{2}}, Z\biggr)^2
= \frac{W_2(L,Z)^2 + W_2(L',Z)^2}{2}.
\]
Theorem~\ref{thm:average_closer} then implies that $L$ is standard normal.
Since
\[
\lim_{n\to\infty} W_2\biggl(\frac{S_{2^n}}{\sqrt{2^n}}, Z\biggr)
= \inf_{n\in\N} W_2\biggl(\frac{S_{2^n}}{\sqrt{2^n}}, Z\biggr) = W_2(L,Z) = 0,
\]
we have $S_{2^n}/\sqrt{2^n} \tto Z$.
\end{proof}

\section{Inverses of CDFs and the $2$-Wasserstein metric} \label{sec:2-Wasserstein}

For a random variable $X$, let us denote its cumulative distribution function by $F_X \colon \R \to [0,1]$.
We say that $(X_n)_{n\in\N}$ converges in distribution to $X$ and write $X_n \tto X$ if
\[ F_{X_n}(t) \to F_X(t) \qquad \text{as $n \to \infty$} \]
for all $t \in \R$ where $F_X$ is continuous.

However, it is not so easy to use this definition directly to prove convergence in distribution. As a result, one often uses some equivalent condition that is easier to show.
Typically the Portmanteau theorem \cite[Theorem~8.4.1]{ResProb} provides such a condition, but in this note we will instead use \emph{inverses} of cumulative distribution functions.
These are well-known tools in probability, and the relevant proofs are rather elementary.
We will mention where one can find proofs whenever we omit some details.

If $F_X$ is a strictly increasing function, then its inverse $F_X^{-1} \colon (0,1) \to \R$ has an interesting property: if we view this as a random variable defined on the sample space $(0,1)$, then it has the same distribution as $X$. To see this, notice that
\[
\P(F_X^{-1} \le x) = F_X(x) = \P(X \le x) \qquad \text{for all $x\in\R$.}
\]

A similar thing can be done even if $F_X$ is not strictly increasing.
In general, we consider the ``inverse" $\inv{X} \colon (0,1) \to \R$ given by
\[
\inv{X}(t) := \inf\{x \in \R : F_X(x) \ge t\}.
\]
It can be shown that $\inv{X}$ is nondecreasing, left-continuous, and has the same distribution as $X$ if we view it as a random variable defined on $(0,1)$. For a proof, see \cite[2.5.2]{ResProb}.

An important property of inverses for the purposes of this note is the following.

\begin{restatable}{proposition}{rearr} \label{prop:rearr}
For any random variables $X$ and $Y$ with finite variances, we have
\begin{equation} \label{eq:rearr}
\E[XY] \le \int_0^1 \inv{X}\inv{Y}.
\end{equation}
\end{restatable}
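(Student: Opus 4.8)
The plan is to read the right-hand side as the correlation of $X$ and $Y$ under a particular coupling and to show that coupling is optimal. Viewing $\inv{X}$ and $\inv{Y}$ as random variables on the probability space $\bigl((0,1),\mathrm{Leb}\bigr)$, the quantity $\int_0^1\inv{X}\inv{Y}$ equals $\E[\inv{X}(U)\inv{Y}(U)]$ for $U$ uniform on $(0,1)$. Since $\inv{X}(U)$ and $\inv{Y}(U)$ have the same marginals as $X$ and $Y$, the pair $(\inv{X}(U),\inv{Y}(U))$ is one coupling of the two laws — the \emph{comonotonic} one, in which both coordinates are nondecreasing functions of a single source of randomness. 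Inequality~\eqref{eq:rearr} then says that, over all couplings with the prescribed marginals, the comonotonic coupling maximizes the expected product; this is a continuous form of the Hardy--Littlewood rearrangement inequality.

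To compare two couplings I would start from the layer-cake identity $a=\int_{-\infty}^{\infty}\bigl(\mathbf{1}_{a>s}-\mathbf{1}_{s<0}\bigr)\,ds$, valid for every real $a$ (check the cases $a\ge 0$ and $a<0$). Applying it to $X$ and to $Y$, multiplying, and taking expectations, one may bring the expectation inside the double integral by Fubini's theorem, giving
\[
\E[XY]=\int_{-\infty}^{\infty}\!\int_{-\infty}^{\infty}\E\bigl[(\mathbf{1}_{X>s}-\mathbf{1}_{s<0})(\mathbf{1}_{Y>t}-\mathbf{1}_{t<0})\bigr]\,ds\,dt,
\]
and the same formula with $X,Y$ replaced by $\inv{X}(U),\inv{Y}(U)$ computes $\int_0^1\inv{X}\inv{Y}$. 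The interchange is justified because the absolute value of the integrand integrates in $(s,t)$ to $|X|\,|Y|$, and $\E[|X|\,|Y|]\le\sqrt{\E[X^2]\,\E[Y^2]}<\infty$ by Cauchy--Schwarz; this is the one place the finite-variance hypothesis is used.

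Expanding the integrand, every term other than $\P(X>s,Y>t)$ depends only on the marginals $\P(X>s)$ and $\P(Y>t)$ and on the constants $\mathbf{1}_{s<0},\mathbf{1}_{t<0}$, all of which are common to the two couplings. Subtracting the two representations therefore cancels these terms and leaves
\[
\int_0^1\inv{X}\inv{Y}-\E[XY]=\int_{-\infty}^{\infty}\!\int_{-\infty}^{\infty}\bigl[\P(\inv{X}(U)>s,\,\inv{Y}(U)>t)-\P(X>s,\,Y>t)\bigr]\,ds\,dt.
\]
It remains to see the bracketed integrand is nonnegative. For the given coupling, $\P(X>s,Y>t)\le\min\{\P(X>s),\P(Y>t)\}$ trivially. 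For the comonotonic coupling I would invoke the defining property of the inverse, $\inv{X}(u)>s\iff u>F_X(s)$ (and likewise for $Y$), to obtain $\P(\inv{X}(U)>s,\inv{Y}(U)>t)=\P\bigl(U>\max\{F_X(s),F_Y(t)\}\bigr)=\min\{\P(X>s),\P(Y>t)\}$, so that the comonotonic coupling attains the Fr\'echet upper bound. The integrand is thus nonnegative and \eqref{eq:rearr} follows.

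The main obstacle is not a hard estimate but the care needed to set up the integral representation: verifying the layer-cake identity for signed variables and, above all, justifying the Fubini interchange, which is exactly where finiteness of the variances enters. Once the double-integral formula is available, the comparison reduces to the elementary set bound $\P(A\cap B)\le\min\{\P(A),\P(B)\}$ together with the standard inverse identity, both of which are clean.
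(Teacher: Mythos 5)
Your proof is correct, but it takes a genuinely different route from the paper. The paper first treats simple (finitely-valued) $X$ and $Y$: it observes that the set of couplings of two finite distributions is compact, so a maximizer of $\E[X'Y']$ exists, and then shows by an explicit swap (the discrete rearrangement step: exchanging the $Y'$-values on two events of equal probability strictly increases the expected product) that any coupling other than $(\inv{X},\inv{Y})$ is not maximal; the general case then follows by approximating with simple random variables and dominated convergence. You instead prove the inequality in one stroke via the Hoeffding-type layer-cake representation $a=\int(\mathbf{1}_{a>s}-\mathbf{1}_{s<0})\,ds$, which after Fubini (correctly justified by $\E|XY|\le\sqrt{\E X^2\,\E Y^2}<\infty$) reduces everything to the pointwise Fr\'echet--Hoeffding bound $\P(X>s,Y>t)\le\min\{\P(X>s),\P(Y>t)\}$, together with the Galois identity $\inv{X}(u)>s\iff u>F_X(s)$ showing the comonotone coupling attains that bound. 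One point that deserves explicit care in your write-up: the four terms obtained by expanding the product are \emph{not} separately integrable over $(s,t)\in\R^2$ (e.g.\ $\int\!\!\int\P(X>s)\mathbf{1}_{t<0}\,ds\,dt=\infty$), so the cancellation of the marginal terms must be performed pointwise on the integrands before integrating, as the difference of two absolutely convergent double integrals --- your phrasing is consistent with this, but the step should be stated precisely. Comparing the two: the paper's argument is more elementary and deliberately mirrors the discrete rearrangement inequality, reusing the same swapping device that reappears in the equality analysis of Theorem~\ref{thm:average_closer}; yours is shorter, avoids both the compactness argument and the approximation step, and gives more, since the gap is exhibited as the integral of the nonnegative function $\min\{\P(X>s),\P(Y>t)\}-\P(X>s,Y>t)$, from which one can also read off when equality holds.
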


The full proof of Proposition~\ref{prop:rearr} can be found in Section~\ref{sec:measure_proofs}, but the basic idea is simple:
it is nothing more than a continuous version of the rearrangement inequality.
The inequality says that if $x_1 \le \dots \le x_n$ and $y_1 \le \dots \le y_n$, then
\[
x_1y_{\sigma(1)} + \dots + x_ny_{\sigma(n)}
\le x_1y_1 + \dots + x_ny_n
\]
for any permutation $\sigma \colon \{1,\dots,n\} \to \{1,\dots,n\}$.
The pair $(\inv{X},\inv{Y})$is a way to couple $X$ and $Y$ so that $X$ increases as $Y$ increases.

Thanks to Proposition~\ref{prop:rearr}, we can express $W_2(X,Y)$ in terms of $\inv{X}$ and $\inv{Y}$.

\begin{restatable}{corollary}{wasschar} \label{cor:W2_inv}
For any random variables $X$ and $Y$ with mean $0$ and variance $1$, we have
\[
W_2(X,Y)^2 = \int_0^1 (\inv{X} - \inv{Y})^2.
\]
\end{restatable}

\begin{proof}
For any $X'$ and $Y'$ with the same distribution as $X$ and $Y$, we have
\[
\E\bigl[(X'-Y')^2\bigr] = 2 - 2\E[X'Y'] \ge 2 - 2\int_0^1 \inv{X}\inv{Y} = \int_0^1 \bigl(\inv{X} - \inv{Y}\bigr)^2.
\]
Thus, the infimum of the possible $\E\bigl[(X'-Y')^2\bigr]$ is $\int_0^1 \bigl(\inv{X} - \inv{Y}\bigr)^2$.
\end{proof}

\section{Proof of Theorem~\ref{thm:average_closer}} \label{sec:average_closer}

\averagecloser*

\begin{proof}
Let $f_1 = F_{X/\sqrt{2}}^\leftarrow$, $f_2 = F_{Y/\sqrt{2}}^\leftarrow$, and $g = F_{Z/\sqrt{2}}^\leftarrow$.
We have
\[
\int_0^1 (f_1(x) - g(x))^2\,dx = \frac{W_2(X,Z)^2}{2} \quad\text{and}\quad \int_0^1 (f_2(x) - g(x))^2\,dx = \frac{W_2(Y,Z)^2}{2}.
\]
by Corollary~\ref{cor:W2_inv}.
Let $F,G \colon (0,1)^2 \to \R$ be given by
\[
F(x,y) = f_1(x) + f_2(y) \quad\text{and}\quad G(x,y) = g(x) + g(y).
\]
If we view $(0,1)^2$ as the sample space where area is interpreted as probability, then $F$ and $G$ are random variables having the same distributions as $(X+Y)/\sqrt{2}$ and $Z$.
Notice that
\begin{equation} \label{eq:W2_ineq}
\begin{split}
W_2\biggl(\frac{X+Y}{\sqrt{2}}, Z\biggr)^2 &\le
\int_0^1\int_0^1 (F(x,y) - G(x,y))^2 \,dx\,dy \\
&= \int_0^1 (f_1(x)-g(x))^2\,dx + \int_0^1 (f_2(y)-g(y))^2\,dy \\
&= \frac{W_2(X,Z)^2 + W_2(Y,Z)^2}{2}.
\end{split}
\end{equation}

Assume that the equality holds and let us show that $X$ and $Y$ are standard normal.
We claim that $G(x_1,y_1) = G(x_2,y_2)$ implies $F(x_1,y_1) = F(x_2,y_2)$.
To show this, suppose that $F(x_1,y_1) < F(x_2,y_2)$.
Since $f_1$ and $f_2$ are left-continuous, for some small $\eps > 0$ the squares $R_1 := (x_1-\eps,x_1] \times (y_1-\eps,y_1]$ and $R_2 := (x_2-\eps,x_2] \times (y_2-\eps,y_2]$ satisfy $\sup F(R_1) < \inf F(R_2)$.
Take small squares $S_1 = (a,a+\delta)\times(b,b+\delta) \subset R_1$ and $S_2 = (c,c+\delta)\times(d,d+\delta) \subset R_2$ such that $\inf G(S_1) > \sup G(S_2)$.
Let $\phi \colon S_1 \to S_2$ be given by $\phi(a+x,b+y) := (c+x,d+y)$ and define
\[
H(x,y) = \begin{cases}
F(\phi(x,y)) & \text{if $(x,y) \in S_1$,} \\
F(\phi^{-1}(x,y)) & \text{if $(x,y) \in S_2$, and} \\
F(x,y) & \text{otherwise.}
\end{cases}
\]

Since
\[
\begin{split}
\int_0^1 \int_0^1 &F(x,y)G(x,y)\,dx\,dy - \int_0^1 \int_0^1 H(x,y)G(x,y)\,dx\,dy \\
&= \int_0^\delta \int_0^\delta (F(a+x,b+y)-F(c+x,d+y))G(a+x,b+y) \,dx\,dy \\
&\quad + \int_0^{\delta} \int_0^{\delta} (F(c+x,d+y)-F(a+x,b+y))G(c+x,d+y) \,dx\,dy \\
&= \int_0^\delta \int_0^\delta (F(a+x,b+y)-F(c+x,d+y)) \\
&\quad\cdot(G(a+x,b+y)-G(c+x,d+y)) \,dx\,dy < 0,
\end{split}
\]
we have
\[
\int_0^1 \int_0^1 (H(x,y) - G(x,y))^2\,dx\,dy < \int_0^1 \int_0^1 (F(x,y) - G(x,y))^2\,dx\,dy.
\]
Since $H$ has the same distribution as $F$, the definition of $W_2$ tells us that the equality cannot hold in \eqref{eq:W2_ineq}, contrary to our assumption.
Thus, we have $F(x_1,y_1) = F(x_2,y_2)$ as claimed.

We now know that $F$ is constant on $G^{-1}(c)$ for all $c\in\R$.
Fix an $n \in \N$, and let $x_k \in \R$ be such that $g(x_k) = k/n$ for each $k \in \Z$.
Since $(x_k,x_{-k}) \in G^{-1}(0)$ and $(x_{k+1},x_{-k}) \in G^{-1}(1/n)$, the value of
\[ f_1(x_{k+1}) - f_1(x_k) = F(x_{k+1},x_{-k}) - F(x_k,x_{-k}) \]
is the same for all $k \in \Z$.
Notice that $g(x_{k+1}) - g(x_k)$ is the same for all $k\in\Z$, too.
Since this argument applies for all $n\in\N$, and $f_1$ is nondecreasing, we must have $f_1 = \alpha g + \beta$ for some $\alpha > 0$ and $\beta \in \R$.
As $\int f_1 = 0$ and $\int f_1^2 = 1$, the only possibility is $f_1 = g$. The same argument gives $f_2 = g$.
\end{proof}

\section{Proof of the Lindeberg--Feller CLT} \label{sec:lindeberg-feller}

\lindebergfeller*

\begin{proof}
Let $M := \lim_{\eps \to 0+} M_\eps$.
For each $n\in\N$, take independent mean-zero random variables $X_{n1},\dots,X_{nm_n}$ ($m_n \in \N$) with $|X_{nj}| \le 1/n$ for $j=1,\dots,m_n$ and $\sum_{j=1}^{m_n} \E [X_{nj}^2] = 1$ such that
\begin{equation} \label{eq:close_to_M}
W_2\Bigl(\sum_{j=1}^{m_n}X_{nj}, Z\Bigr) \ge M - 1/n.
\end{equation}

Since $\max_{j=1}^{m_n} \E [X_{nj}^2] \to 0$ as $n\to\infty$, we can choose $1 \le k_n \le m_n$ for each $n\in\N$ so that $v_n := \sum_{j=1}^{k_n} \E X_{nj}^2 \to 1/2$ as $n\to\infty$.
By passing to a subsequence if necessary, we may assume that
\[
\frac{1}{\sqrt{v_n}} \sum_{j=1}^{k_n} X_{nj} \tto L_1
\quad\text{and}\quad
\frac{1}{\sqrt{1-v_n}} \sum_{j=k_n+1}^{m_n} X_{nj} \tto L_2
\]
for some $L_1$ and $L_2$, by using Lemma~\ref{lem:conv_subseq}.

Since $|X_{nj}| \le 1$, we have $\E[X_{nj}^4] \le \E[X_{nj}^2]$, and thus
\[
\E\Bigl[\Bigl(\sum_{j=1}^{k_n} X_{nj}\Bigr)^4\Bigr]
\le \sum_{j=1}^{k_n} \E [X_{nj}^4] + 3\Bigl(\sum_{j=1}^{k_n}\E X_{nj}^2\Bigr)^2 \le v_n + 3v_n^2
\]
for all $n\in\N$.
By Lemma~\ref{lem:bdd_4_moment}, $L_1$ has mean $0$ and variance $1$.
The same holds for $L_2$.
By Proposition~\ref{prop:dist_implies_W2}, we have
\[
W_2(L_1,Z) = \lim_{n\to\infty} W_2\Bigl( \frac{1}{\sqrt{v_n}} \sum_{j=1}^{k_n} X_{nj}, Z\Bigr) \le M.
\]
Similarly we have $W_2(L_2,Z) \le M$.

Notice that we have
\[ \sum_{j=1}^{m_n} X_{nj} \tto \frac{L_1 + L_2}{\sqrt{2}}. \]
By Proposition~\ref{prop:dist_implies_W2} and \eqref{eq:close_to_M}, we have
\[
W_2\Bigl(\frac{L_1 + L_2}{\sqrt{2}}, Z\Bigr)
= \lim_{n\to\infty} W_2\Bigl( \sum_{j=1}^{m_n} X_{nj}, Z\Bigr) = M.
\]
Since
\[
M^2 = W_2\Bigl(\frac{L_1 + L_2}{\sqrt{2}}, Z\Bigr)^2 \le \frac{W_2(L_1,Z)^2 + W_2(L_2,Z)^2}{2} \le M^2,
\]
Theorem~\ref{thm:average_closer} implies that $L_1$ and $L_2$ are standard normal. Therefore, we have
\[ M = W_2(Z,Z) = 0. \qedhere\]
\end{proof}

\section{Measure theoretic probabilistic proofs} \label{sec:measure_proofs}

We can prove the following by directly using the definition of convergence in distribution.

\convsubseq*

\begin{proof}
Let $q \in \Q$.
By the Bolzano--Weierstrass theorem, some subsequence of $(F_{X_n}(q))_{n\in\N}$ converges to a finite number.
Using diagonaization, we can find $n_1 < n_2 < \dots$ such that
\[
\lim_{k\to\infty} F_{X_{n_k}}(q) = F_q \qquad \text{for some $F_q\in\R$}
\]
for all rational $q \in (0,1)$.

Now let $F \colon \R \to [0,1]$ be given by
\[ F(x) := \inf\{F_q :q \in (x,\infty)\cap\Q\}. \]
Then it can be shown that $F$ is a non-decreasing right continuous function such that $F_{n_k}(x) \to F(x)$ as $k\to\infty$ for all $x \in \R$ at which $F$ is continuous. For more details, see \cite[Lemma~9.6.2]{ResProb}.

By chebyshev's inequality, for each $M > 0$ we have
\[
F_{X_n}(-M) = \P(X_n \le -M) \le B/M^2
\quad\text{and}\quad 1 - F_{X_n}(M) = \P(X_n > M) \le B/M^2.
\]
This implies $F(-M) \le B/M^2$ and $F(M) \ge 1-B/M^2$, and thus we have $F(x) \to 1$ as $x \to \infty$ and $F(x) \to 0$ as $x \to -\infty$.
This shows that $F$ is a cumulative distribution function.
\end{proof}

An important property of inverses is that $X_n \tto X$ if and only if
\[ \inv{X_n}(t) \to \inv{X}(t) \qquad \text{as $n\to\infty$} \]
for all $t \in (0,1)$ where $\inv{X}$ is continuous. See \cite[Proposition~8.3.1 and Theorem~8.3.2]{ResProb} for a proof.
Replacing convergence in distribution with convergence with probability $1$ is useful when we prove facts like the following.

\momentconv*

\begin{proof}
Since we can replace $X_n$ and $X$ with $\inv{X_n}$ and $\inv{X}$, it is enough to assume $X_n \to X$ with probability $1$ instead of $X_n \tto X$.
Let $M > 0$ be such that $\P(X^2 = M) = 0$.
Since $X_n 1_{\{|X_n|\le M\}} \to X 1_{\{|X|\le M\}}$ on $\{|X| \ne M\}$, which has probability $1$, the bounded convergence theorem implies
\[
\E[X_n^2; X_n^2 \le M] \to \E[X^2; X^2 \le M].
\]
Here, $\E[X;\varphi(X)]$ denotes $\E[X1_{\{\varphi(X)\}}]$. Since
\[
\E[X_n^2; X_n^2 > M] \le \E[X_n^4/M^2] \le \frac{B}{M^2},
\]
letting $M \to \infty$ gives $\E[X_n^2] \to \E[X^2]$.
The proof for $\E[X_n] \to \E[X]$ is similar.
\end{proof}

Out interest in inverses in this note came from the following ``continuous version" of the rearrangement inequality.
The proof also resembles its discrete analogue.

\rearr*

%

\begin{proof}
First assume that $X$ and $Y$ are simple, i.e.\ there are $a_1,\dots,a_n,b_1,\dots,b_m \in \R$ such that $X \in \{a_1,\dots,a_n\}$ and $Y \in \{b_1,\dots,b_m\}$.
Let $X'$ and $Y'$ be random variables with the same distribution as $X$ and $Y$.
Since the set of possible $(\P(X' = a_i, Y' = b_j))_{i\le n, j\le m}$ is compact, the supremum of possible $\E[X'Y']$ is attained.

If $(X',Y')$ has a different joint distribution from $(\inv{X},\inv{Y})$, then there are $a,b,c,d \in \R$ with
\[ a < c,\quad b > d,\quad \P(X' = a, Y' = b) > 0,\quad \text{and}\quad \P(X' = c, Y' = d) > 0; \]
i.e.\ ``$X'$ doesn't increase as $Y'$ increases."
Now take events $E \subset \{X' = a, Y' = b\}$ and $F \subset \{X' = c, Y' = d\}$ with the same nonzero probability and swap the values of $Y'$ on $E$ and $F$ to form a new random variable $Y''$. (We can assume that the underlying sample space is good enough to do this.)
Since
\[
\E[XY'']-\E[XY'] = (ad + bc - ab - cd)\P(E) = (c-a)(b-d)\P(E) > 0,
\]
the supremum mentioned above is not attained by $(X',Y')$.
Since this is true whenever $(X',Y')$ has a different joint distribution from $(\inv{X},\inv{Y})$, the supremum must be attained by $(\inv{X},\inv{Y})$. This proves \eqref{eq:rearr}.

Now assume that $X$ and $Y$ are not necessarily simple.
We can take simple $X_1,X_2,\dots$ and $Y_1,Y_2,\dots$ such that
$|X_n| \le |X|$, $|Y_n| \le |Y|$, $X_n \to X$, and $Y_n \to Y$, $|\inv{X_n}| \le |\inv{X}|$, $|\inv{Y_n}| \le |\inv{Y}|$, $\inv{X_n} \to \inv{X}$, and $\inv{Y_n} \to \inv{Y}$.
Since $|X_nY_n| \le |XY|$, the dominated convergence theorem \cite[Theorem~5.3.3]{ResProb} gives $\E[X_nY_n] \to \E[XY]$.
Similarly we have $\int_0^1 \inv{X_n}\inv{Y_n} \to \int_0^1 \inv{X}\inv{Y}$.
As $ \E[X_nY_n] \le \int_0^1 \inv{X_n}\inv{Y_n}$, letting $n \to \infty$ gives \eqref{eq:rearr}.
\end{proof}

Recall the following corollary.

\wasschar*

Using this characterization of $2$-Wasserstein metric, we can prove the relation between $W_2$ and the convergence in distribution.

\wimpliesdist*
\begin{proof}
Assume that $X_n \not\tto X$. Then there is a $t \in (0,1)$ where $\inv{X}$ is continuous such that $\inv{X_n}(t) \not\to \inv{X}(t)$.
Let $\eps > 0$ and $n_1 < n_2 < \dots$ be such that
\[
\bigl|\inv{X_{n_k}}(t) - \inv{X}(t)\bigr| \ge 2\eps \qquad \text{for all $k\in\N$.}
\]
Since $\inv{X}$ is continuous at $t$, there is a $\delta > 0$ such that $\bigl|\inv{X}(s) - \inv{X}(t)\bigr| \le \eps$ for all $s \in (t-\delta,t+\delta)$.

Let $k\in\N$. If $\inv{X_{n_k}}(t) \le \inv{X}(t) - 2\eps$, then on $(t-\delta,t]$ we have $\inv{X} - \inv{X_{n_k}} \ge \eps$.
If $\inv{X_{n_k}}(t) \ge \inv{X}(t) + 2\eps$, then on $[t,t+\delta)$ we have $\inf{X_{n_k}} - \inv{X} \ge \eps$.
In either case, we have
\[
\int_0^1 \bigl( \inv{X_{n_k}} - \inv{X} \bigr)^2 \ge \eps^2\delta.
\]
Since this is true for all $k \in \N$, we have $W_2(X_n,X) \not\to 0$.
\end{proof}

\distimpliesw*
\begin{proof}
Since we can replace $X_n$, $X$, and $Y$ with $\inv{X_n}$, $\inv{X}$, and $\inv{Y}$, it is enough to show that if $X_n \to X$ with probability $1$, then $\E[(X_n-Y)^2] \to \E[(X-Y)^2].$
As $\E X_n^2 = \E X^2 = \E Y^2 = 1$, it suffices to show $\E[(X_n-X)Y] \to 0$.
Since
\[
\bigl|\E[(X_n-X)Y]\bigr|^2 \le \E\bigl[(X_n-X)^2\bigr]\E[Y^2]
\]
by the Cauchy-Schwarz inequality, showing $\E\bigl[(X_n-X)^2\bigr] \to 0$ is enough.

Let $\eps > 0$ be given.
Choose $M > 1$ with $\E\bigl[X^2; |X|>M-1\bigr] \le \eps$.
Let $\phi \colon \R \to \R$ be the continuous function that satisfies $\phi(x) = x^2$ if $|x| \le M-1$, $\phi(x) = 0$ if $|x| \ge M$, and is linear on the remaining intervals.
By the bounded convergence theorem, we have
\[ \lim_{n\to\infty} \E[\phi(X_n)] = \E[\phi(X)] \ge 1-\eps, \]
and thus $\limsup_{n\to\infty} \E\bigl[X_n^2; |X_n|>M\bigr] \le \eps$.

Since $(X_n-X)^2 \le 2(X_n^2 + X^2)$, we have
\[
\begin{split}
\E\bigl[(X_n-X)^2; |X_n - X| > 2M\bigr] 
&\le 2\E[X_n^2 + X^2; |X_n| \ge |X|, |X_n| > M] \\
&\quad+ 2\E[X_n^2 + X^2; |X_n| < |X|, |X| > M] \\
&\le 4\E[X_n^2; |X_n|>M] + 4\E[X^2; |X|>M].
\end{split}
\]
This implies
\[
\limsup_{n\to\infty} \E\bigl[(X_n-X)^2;|X_n-X|>2M\bigr] \le 8\eps.
\]
On the other hand, the bounded convergence theorem implies
\[\E\bigl[(X_n-X)^2; |X_n-X| \le 2M\bigr] \to 0.\]
Combining the two pieces together and letting $\eps \to 0$ finishes the proof.
\end{proof}

\bibliographystyle{alpha}
\bibliography{references}
\end{document}